\documentclass[12pt]{amsart}
\usepackage{amssymb}
\usepackage{amsmath}
\usepackage{bbm}
\usepackage{color}
\usepackage{verbatim}
\usepackage{graphicx} 

\pagestyle{myheadings}

\definecolor{darkred}{rgb}{0.75,0,0.3}

\setlength{\textwidth}{16cm}
\setlength{\textheight}{21.7cm}   
\setlength{\oddsidemargin}{0.0cm}
\setlength{\evensidemargin}{0.0cm}
\newcommand\AND{\quad\text{and}\quad}
\newcommand\ep{\varepsilon}

\newcommand\N{\mathbb N}
\newcommand\Prob{\mathsf{Pr}}

\newtheoremstyle{mythm}
  {9pt}
  {9pt}
  {\itshape}
  {0pt}
  {\bfseries}
  {}
  { }
  {
  \thmname{ #1}\thmnote{ #3}}

\newtheoremstyle{mydef}
  {9pt}
  {9pt}
  {\normalfont}
  {0pt}
  {\bfseries}
  {}
  { }
  {
  \thmname{ #1}\thmnote{ #3}}

\theoremstyle{mythm}
\newtheorem{theorem1}[equation]{Theorem 1.}
\newtheorem{theorem2}[equation]{Theorem 2.}
\newtheorem{pro}[equation]{Proposition.}
\newtheorem{lem}[equation]{Lemma.}

\theoremstyle{mydef}
\newtheorem{rmk}[equation]{Remark.}

\begin{document}$\,$ \vspace{-1truecm}
\title{Some old and basic facts about random walks on groups}
\author{\bf Wolfgang Woess}

\address{\parbox{.8\linewidth}{Institut f\"ur Diskrete Mathematik,\\ 
Technische Universit\"at Graz,\\
Steyrergasse 30, A-8010 Graz, Austria}}
\email{woess@tugraz.at}

\date{\today\ (small correction)} 
\begin{abstract}
This note contains old instead of new results about random walks on groups, which may 
serve as a supplement to  the author's monograph \cite{Wbook}. First, we 
exhibit a basic exercise on the periodicity classes of random walk. The second topic concerns some
basics on ratio limits for random walks, which had been published ``only'' in German in the 1970ies.
\end{abstract}

\maketitle
       
\markboth{{\sf W. Woess}}
{{\sf Some old and basic facts about random walks on groups}}
\baselineskip 15pt

\section{Introduction}\label{sec:intro}

In all that follows, $\Gamma$ will be a countable, discrete group written multiplicatively,
with elements typically denoted by $x,y$, etc.,  unit element $e$, and $\mu$ a probability
measure on $\Gamma$. The support of $\mu$ is
$$
S_{\mu} = \{ x \in \Gamma : \mu(x) > 0 \}\,.
$$
Recall that the random walk on $\Gamma$ with law $\mu$ is the time-homogeneous
Markov chain with state space $\Gamma$ and transition probabilities $p_{\mu}(x,y) = \mu(x^{-1}y)$.
The $n$-step transition probabilities are then $p_{\mu}^{(n)}(x,y) = \mu^{(n)}(x^{-1}y)$,
where $\mu^{(n)}$ is the $n^{\textrm{th}}$ convolution power of $\mu$. Its support is
$S_{\mu}^n$, the set of all products $x_1 \cdots x_n$ of group elements. 
Thus, the random walk is \emph{irreducible} in the sense of Markov chains (resp., non-negative
matrics) if and only if
\begin{equation}\label{eq:semigroup}
\bigcup_{n=1}^{\infty} S_{\mu}^n = \Gamma\,,
\end{equation}
i.e., the semigroup genereated by the support is all of $\Gamma$. (In general, for
two subsets $A, B$ of $\Gamma$, their product is $AB = \{xy : x \in A\,,\; y \in B\}$,
and $A^{-1} = \{x^{-1} : x \in A \}$.)

\smallskip

$\bullet\;$ In this note, we always assume irreducibility.

\smallskip

In  \S \ref{sec:period}, we recall a basic fact about the period and aperiodicity
of the random walk, and in \S \ref{sec:specrad}, we recall some ratio limit theorems.

\section{Aperiodicity}\label{sec:period}

As an irreducible Markov chain, the random walk has a \emph{period}
$$
d = d(\mu) = \gcd\{ n \in \N : \mu^{(n)}(e) > 0 \},
$$
compare with \cite[page 3]{Wbook}. The random walk is called \emph{aperiodic,}
if $d(\mu)=1$. For general $d$, as for any irreducible Markov chain, there is a 
partition in subsets
$$
\Gamma = C_0 \,\dot{\cup}\, C_1 \,\dot{\cup}\,\dots\,\dot{\cup}\, C_{d-1}\,,
$$
with $e \in C_0\,$, such that the random walk wanders through the seta $C_j$ cyclically:
if $x \in C_j$ and $y \in C_k$ then $\mu^{(n)}(x^{-1y}) > 0$ only when 
$n \equiv k-j (\mod d)$, and then this holds fal all but finitely many such $n$. 
We now recall a group-theoretical fact which was considered a basic exercise in earlier
days but now is sometimes being ``discovered'' by younger researchers. The following
has also has a generalisation to random walks on locally compact groups, which was
studied by {\sc Woess}~\cite{Wperiod} -- which however is written in French and not available
online, apart from the author's webpage.

\begin{pro}\label{pro:normal} \hspace*{3.1cm}  $\displaystyle
\Gamma_0 = \bigcup_{k=1}^{\infty} S_{\mu}^{-k}S_{\mu}^{k}$\\[3pt]
is a normal subgroup of $\Gamma$ with index $d$. The factor group 
$\Gamma/\Gamma_0$ is cyclic of order $d$.
One has $C_0 = \Gamma_0\,$, and the sets $C_j$ are the cosets of $\Gamma_0\,$.

The probability measure $\mu^{(d)}$ is supported by $\Gamma_0$ and 
irreducible on that subgroup, with period $1$.
\end{pro}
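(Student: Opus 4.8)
The plan is to build a surjective group homomorphism $\psi\colon\Gamma\to\Z/d\Z$ whose kernel is exactly $\Gamma_0$; all four assertions then drop out. Two preliminaries first. Put $\mathcal N=\{n\ge1:\mu^{(n)}(e)>0\}=\{n\ge1:e\in S_{\mu}^n\}$. From $\mu^{(m+n)}(e)\ge\mu^{(m)}(e)\,\mu^{(n)}(e)$ the set $\mathcal N$ is closed under addition; it is non-empty by irreducibility; and $\gcd\mathcal N=d$ is the definition of $d$. Hence every element of $\mathcal N$ is a multiple of $d$, and by the elementary fact that an additively closed subset of $\N$ contains all sufficiently large multiples of its gcd, $\mathcal N$ contains every sufficiently large multiple of $d$. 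Secondly, $S_{\mu}^mS_{\mu}^n=S_{\mu}^{m+n}$ and $(S_{\mu}^n)^{-1}=S_{\mu}^{-n}$ hold as identities of subsets of $\Gamma$, and by irreducibility each $x\in\Gamma$ lies in $S_{\mu}^n$ for some $n\ge1$.

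The crucial step is the claim: if $x\in S_{\mu}^n\cap S_{\mu}^{n'}$, then $n\equiv n'\pmod d$. Indeed, choosing $m$ with $x^{-1}\in S_{\mu}^m$ (irreducibility), one gets $e=xx^{-1}\in S_{\mu}^{n+m}\cap S_{\mu}^{n'+m}$, so $n+m,\,n'+m\in\mathcal N$ are both divisible by $d$, whence $d\mid n-n'$. This is the one point where irreducibility and the definition of the period genuinely interact, and I expect it to be the main obstacle; the rest is bookkeeping. Granting it, $\psi(x):=(n\bmod d)$, for any $n\ge1$ with $x\in S_{\mu}^n$, is well defined, and $S_{\mu}^mS_{\mu}^n=S_{\mu}^{m+n}$ makes $\psi$ a homomorphism onto $\Z/d\Z$ (onto, since $S_{\mu}^1,\dots,S_{\mu}^d$ are non-empty and realize all residues; note $\psi(e)=0$ because $e\in S_{\mu}^n$ for some $n\in\mathcal N$). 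Next I identify $\ker\psi=\Gamma_0$. If $x=a^{-1}b$ with $a,b\in S_{\mu}^k$, then $\psi(x)=-\psi(a)+\psi(b)=0$ since $\psi(a)=\psi(b)=(k\bmod d)$; thus $\Gamma_0\subseteq\ker\psi$. Conversely, if $\psi(x)=0$, say $x\in S_{\mu}^{k_0d}$, pick $j$ so large that both $jd$ and $(k_0+j)d$ lie in $\mathcal N$; then $x\in S_{\mu}^{k_0d}=S_{\mu}^{k_0d}\{e\}\subseteq S_{\mu}^{k_0d}S_{\mu}^{jd}=S_{\mu}^{(k_0+j)d}$ and $e\in S_{\mu}^{(k_0+j)d}$, so $x=e^{-1}x\in S_{\mu}^{-(k_0+j)d}S_{\mu}^{(k_0+j)d}\subseteq\Gamma_0$.

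With $\Gamma_0=\ker\psi$ in hand, $\Gamma_0$ is a normal subgroup, $\Gamma/\Gamma_0\cong\psi(\Gamma)=\Z/d\Z$ is cyclic of order $d$, and $[\Gamma:\Gamma_0]=d$. The cosets of $\Gamma_0$ are the fibres $\psi^{-1}(j)$, which equal the union of all $S_{\mu}^n$ with $n\equiv j\pmod d$; comparing with the defining property of the periodicity classes recalled before the Proposition (that $x\in C_j$ forces $\mu^{(n)}(x)>0$ only for $n\equiv j$, and then for all large such $n$) shows $C_j=\psi^{-1}(j)$, in particular $C_0=\Gamma_0$ and $e\in C_0$. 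One could also simply \emph{define} $C_j:=\psi^{-1}(j)$ and read off disjointness and cyclic wandering directly from $\psi$.

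Finally, $\supp\mu^{(d)}=S_{\mu}^d\subseteq\psi^{-1}(0)=\Gamma_0$. For irreducibility of $\mu^{(d)}$ on $\Gamma_0$ one checks that the semigroup generated by $S_{\mu}^d$ is $\Gamma_0$, i.e.\ $\bigcup_{k\ge1}S_{\mu}^{kd}=\Gamma_0$: the inclusion ``$\subseteq$'' holds since each $S_{\mu}^{kd}\subseteq\psi^{-1}(0)$, and ``$\supseteq$'' since any $x\in\Gamma_0$ lies in some $S_{\mu}^n$ with $n\ge1$ and $d\mid n$. Its period equals $\gcd\{k\ge1:\mu^{(kd)}(e)>0\}=\gcd\{k\ge1:kd\in\mathcal N\}$; writing $\mathcal N=d\,\mathcal N'$ gives $d=\gcd\mathcal N=d\cdot\gcd\mathcal N'$, so $\gcd\mathcal N'=1$ and the period is $1$.
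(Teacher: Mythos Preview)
Your argument is correct. Both proofs hinge on the same key claim (if $x\in S_\mu^n\cap S_\mu^{n'}$ then $d\mid n-n'$), proved identically, but the two routes diverge from there. The paper verifies directly that $\Gamma_0$ is a normal subgroup: closure under inverses is immediate, normality follows from $x^{-1}S_\mu^{-k}S_\mu^kx\subseteq S_\mu^{-n-k}S_\mu^{k+n}$ when $x\in S_\mu^n$, and closure under products uses normality via $(S_\mu^{-k}S_\mu^k)(S_\mu^{-n}S_\mu^n)\subseteq S_\mu^{-n}\Gamma_0 S_\mu^n\subseteq\Gamma_0$; the index and cyclic quotient are then obtained by fixing $x_0\in S_\mu$ and showing the cosets $x_0^k\Gamma_0$ for $k=0,\dots,d-1$ are distinct. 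Your approach instead packages everything into the homomorphism $\psi:\Gamma\to\Z/d\Z$ and reads off normality, index, and the structure of the quotient from the first isomorphism theorem once $\ker\psi=\Gamma_0$ is established. Your route is more conceptual and economical (normality comes for free), while the paper's hands-on verification makes the subgroup axioms for $\Gamma_0$ explicit without first constructing~$\psi$; in particular the paper's use of the already-proved normality to obtain closure under products is a small trick worth noting.
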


\begin{proof}
We first show that by \eqref{eq:semigroup}, one has that $\Gamma_0$
is a normal subgroup; this can be found, e.g., in the lecture notes by 
{\sc Mukherjea and Tserpes}~\cite[p. 97]{MuTs}.
\\
(a) It is clear that $e \in \Gamma_0$ and that $x^{-1}\in\Gamma_0$ whenever $x \in \Gamma_0\,$.
\\
(b) If $x \in \Gamma$ then there is $n$ such that $x \in S_{\mu}^n$. Hence 
$x^{-1}S_{\mu}^{-k}S_{\mu}^{k}x \subset S_{\mu}^{-n-k}S_{\mu}^{k+n}$, so that
$x^{-1}\Gamma_0x \subset \Gamma_0\,$.
\\
(c) Finally, by (b), 
$$
(S_{\mu}^{-k}S_{\mu}^{k})(S_{\mu}^{-n}S_{\mu}^{n}) \subset (\Gamma_0 S_{\mu}^{-n})S_{\mu}^{n} 
= S_{\mu}^{-n}\Gamma_0 S_{\mu}^{n} \subset \Gamma_0\,,
$$
so that $\Gamma_0$ is closed with respect to the group product.

Next, we observe that $d(\mu) = \gcd N_{\mu}$, where $N_{\mu} = \{ n \in \N : e \in S_{\mu}^n \}$, 
and that the latter set
is closed under addition. This implies via elementary number theory that there are $n_1, n_2$
such that $e \in S_{\mu}^{n_1} \cap S_{\mu}^{n_2}$ and $d(\mu)=n_2-n_1\,$. Therefore
$$
e \in S_{\mu}^{-n_1} S_{\mu}^{n_1}S_{\mu}^{d} \subset \Gamma_0 S_{\mu}^{d}\,,%
\AND S_{\mu}^{-d} \subset S_{\mu}^{-d} \Gamma_0 S_{\mu}^{d} \subset \Gamma_0\,.
$$
Therefore also $S_{\mu}^{nd} \subset \Gamma_0$ for all $n \in \N$.

Now suppose that $S_{\mu}^k \cap S_{\mu}^m \ne \emptyset$ for $k \ne m$, and let $x$ be
an element of that set. By \eqref{eq:semigroup}, there is $n$ such that $x^{-1} \in S_{\mu}^n$.
Then $e \in  S_{\mu}^{k+n} \cap S_{\mu}^{m+n}$, whence $k+n$ and $m+n$ belong to $N_{\mu}\,$.
We conclude that $d$ divides $m-k$. 

In particular, let $x_0 \in S_{\mu}$. If $x_0^k \in  \Gamma_0$
then $x_0^k \in S_{\mu}^{-n} S_{\mu}^{-n}$ for some $n$, 
so that $S_{\mu}^{k+n} \cap S_{\mu}^{n} \ne \emptyset$ and $d$ divedes $k$.
Thus, the cosets $x_0^k\Gamma_0$, $k=0\,,\dots, d-1$ are all disctinct, and $x_0^d\Gamma_0 = \Gamma_0$.
This holds for any $x_0 \in S_{\mu}\,$, and it shows that $\Gamma/\Gamma_0$ is cyclic of order $d$.

Turning to the random walk, if $x \in x_0^k \Gamma^0$ for $k \in \{0,\dots, d-1\}$ 
and $p(x,y) = \mu(x^{-1}y)> 0$ then 
$y \in S_{\mu}^{k+1}$ and $y x_0^{d-1-k} \in S_{\mu}^d \subset \Gamma_0\,$. Consequently,
$y \in x_0^{k+1}\Gamma_0\,$.

By all that we have proved so far, we have that $S_{\mu}^n \cap \Gamma_0 = \emptyset$
when $d$ does not divide $n$, so that 
$$
\bigcup_{n=1}^{\infty} S_{\mu}^{nd} = \Gamma_0\,,
$$
which means that $\mu^{(d)}$ is irreducible on $\Gamma_0\,$. Since $N_{\mu}$ is closed
under addition, there is $n_0$ such that $nd \in \N_{\mu}$ for all $n \ge n_0\,$.
Therefore $\mu^{(d)}$ has period 1 on $\Gamma_0\,$.
\end{proof}

If $\mu$ has period 1, then it is called aperiodic. By the Proposition, we may
restrict attention to that case.

\section{Spectral radius and convergence}\label{sec:specrad}

For irreducible $\mu$ as in the preceding section, the number
$$
\rho(\mu) = \limsup_{n \to \infty} \mu^{(n)}(x)^{1/n}
$$
is independent of $x \in \Gamma$. It is called the \emph{spectral radius} in a variety
of references, including \cite{Wbook}. We stick to this terminology, but 
one should be careful: in general, this is not the spectral radius of an operator;
it \emph{is} the spectral radius (and norm) of $\mu$ as a convolution operator on $\ell^2(\Gamma)$
when $\mu$ is symmetric, i.e., $\mu(x^{-1}) = \mu(x)$ for all $x$, and on a 
weighted $\ell^2$-space when $\mu$ is reversible, see \cite[\S 10]{Wbook}.
The study of this number, for general irreducible Markov chains, goes back to 
{\sc Pruitt}~\cite{Pruitt} and {\sc Vere-Jones}~\cite{VJ}; see also the monograph by
{\sc Seneta}~\cite{Sen}.


\begin{lem}\label{lem:roots} For every $x \in \Gamma$, one has convergence:
$$
\lim_{n \to \infty} \mu^{(n)}(x)^{1/n} = \rho(\mu)\,,
$$
and the sequence $\mu^{(n)}(e)^{1/n}$ converges to its limit from below. Furthermore,
there is $k_0$ such that 
$$
\mu^{(n)}(e) < \rho(\mu)^n \quad \text{strictly for all }\;n \ge k_0\,. 
$$
\end{lem}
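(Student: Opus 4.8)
The plan is to reduce everything to the aperiodic case and then exploit submultiplicativity of the return probabilities. By the Proposition we may pass to $\Gamma_0$ and assume $\mu$ has period $1$, since $\mu^{(n)}(x)$ is eventually zero whenever $d\nmid n$ or $x\notin\Gamma_0$, and on the arithmetic progression $n\equiv k\pmod d$ the behaviour of $\mu^{(n)}(x)$ is governed by that of $\mu^{(nd)}(e)$ on $\Gamma_0$. So assume $d=1$. The key observation is that $a_n := \mu^{(n)}(e)$ is \emph{supermultiplicative}: $a_{m+n} = \mu^{(m+n)}(e) \ge \mu^{(m)}(e)\,\mu^{(n)}(e) = a_m a_n$, because one way to return to $e$ in $m+n$ steps is to return at time $m$ and again over the next $n$ steps. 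Taking logarithms, $b_n := \log a_n$ (with $b_n = -\infty$ allowed) is superadditive, so by Fekete's lemma $b_n/n \to \sup_n b_n/n =: \log\rho$, i.e. $a_n^{1/n} \to \rho(\mu)$ and moreover $a_n^{1/n} \le \rho(\mu)$ for every $n$ with $a_n>0$ — this is the ``convergence from below'' claim. (For aperiodic irreducible $\mu$ we have $a_n>0$ for all large $n$, so the limsup defining $\rho(\mu)$ is a genuine limit and agrees with this $\rho$.)

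Next I would extend the convergence from $x=e$ to arbitrary $x\in\Gamma$. Pick $r,s$ with $\mu^{(r)}(x)>0$ and $\mu^{(s)}(x^{-1})>0$ (possible by irreducibility). Then for any $n$,
\[
\mu^{(n+r+s)}(e) \ge \mu^{(r)}(x)\,\mu^{(n)}(x)\,\mu^{(s)}(x^{-1}) \quad\text{and}\quad \mu^{(n+r+s)}(x)\ge \mu^{(n)}(e)\,\mu^{(r)}(x),
\]
Wait — I must be careful with the order; more cleanly, $\mu^{(n+r)}(x) \ge \mu^{(n)}(e)\mu^{(r)}(x)$ and $\mu^{(n)}(e)\le$ something; let me just record the two-sided sandwich: with constants $c_1=\mu^{(s)}(x^{-1})$, $c_2 = \mu^{(r)}(x)$ one gets
\[
c_2\,\mu^{(n)}(e) \le \mu^{(n+r)}(x) \quad\text{and}\quad c_1\,\mu^{(n+r)}(x) \le \mu^{(n+r+s)}(e).
\]
Taking $n$-th roots and letting $n\to\infty$, both $\mu^{(n+r)}(x)^{1/n}$ and hence $\mu^{(n)}(x)^{1/n}$ are squeezed to $\rho(\mu)$, using that $\mu^{(n)}(e)^{1/n}\to\rho(\mu)$ and $c_i^{1/n}\to 1$. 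This handles the first displayed equation of the Lemma for all $x$.

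Finally, for the strict inequality $\mu^{(n)}(e) < \rho(\mu)^n$ for all large $n$: we already know $\mu^{(n)}(e)\le \rho(\mu)^n$ always, so the issue is ruling out equality for all large $n$. Suppose $\mu^{(n_0)}(e) = \rho(\mu)^{n_0}$ for some $n_0\ge 1$. Then by supermultiplicativity $\mu^{(k n_0)}(e)\ge \mu^{(n_0)}(e)^k = \rho(\mu)^{k n_0}$, forcing equality $\mu^{(kn_0)}(e) = \rho(\mu)^{kn_0}$ for every $k$; combined with the inequality $\mu^{(m+n_0)}(e) \ge \mu^{(m)}(e)\mu^{(n_0)}(e)$ and superadditivity of $\log\mu^{(n)}(e)$, one deduces that $\log\mu^{(n)}(e) = n\log\rho(\mu)$ is actually \emph{additive} (affine) along the full semigroup of $n$ with $\mu^{(n)}(e)>0$ — equivalently the walk returns to $e$ with the extremal exponential rate with no ``defect''. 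This is a strong rigidity condition: I expect it to be incompatible with irreducibility plus aperiodicity of a random walk on a nontrivial group — heuristically because the return probability must ``lose mass'' to the rest of the group. Concretely, if $\rho(\mu)<1$ one can argue via the generating function $G(z) = \sum_n \mu^{(n)}(e)z^n$, whose radius of convergence is $1/\rho(\mu)$; equality $\mu^{(n)}(e)=\rho(\mu)^n$ for infinitely many $n$ means $G$ has a pole of order $\ge 1$ at $z=1/\rho(\mu)$ with no faster-than-geometric correction, which by a renewal/Pringsheim argument forces the walk to be essentially supported on a finite return structure, contradicting irreducibility on an infinite group; if $\rho(\mu)=1$ the walk is recurrent and the claim reduces to the classical fact that an aperiodic recurrent irreducible chain on an infinite group has $\mu^{(n)}(e)\to 0$ strictly below $1$. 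I anticipate the strict-inequality clause is the main obstacle: the clean route is to invoke the strict log-convexity / aperiodicity argument showing $\mu^{(n)}(e)^{1/n}$ cannot be eventually constant unless $\Gamma$ is finite (which is excluded here, or handled trivially), so there must be infinitely many $n$ with strict inequality, and then supermultiplicativity ($\mu^{(n+m)}(e)\ge\mu^{(n)}(e)\mu^{(m)}(e)$ with $d=1$) propagates the strict inequality to \emph{all} sufficiently large $n$, yielding the threshold $k_0$.
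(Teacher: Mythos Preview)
Your treatment of the convergence claims is fine and mirrors the paper: Fekete's lemma applied to the supermultiplicative sequence $a_n=\mu^{(n)}(e)$, giving $a_n^{1/n}\uparrow\rho(\mu)$, and then a sandwich via irreducibility to pass to general $x$. (The reduction to the aperiodic case at the outset is harmless but not needed; the paper simply works under that standing hypothesis.)

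The genuine gap is in the strict inequality $\mu^{(n)}(e)<\rho(\mu)^n$. Your argument here is only heuristic, and two of its concrete steps fail. First, the claim that ``supermultiplicativity propagates the strict inequality to all sufficiently large $n$'' has the wrong sign: $a_{m+n}\ge a_m a_n$ gives \emph{lower} bounds, so from $a_{n_1}<\rho^{n_1}$ you cannot deduce $a_{n_1+m}<\rho^{n_1+m}$; equivalently, equality at a single large $n$ is not forced by equality at smaller indices. Second, the assertion ``if $\rho(\mu)=1$ the walk is recurrent'' is false (e.g.\ simple random walk on $\mathbb{Z}^3$), so that branch of your case split collapses. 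The generating-function/Pringsheim sketch is likewise not a proof: a pole at $1/\rho$ is not implied merely by $a_n=\rho^n$ along a subsequence.

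What you are missing is Gerl's elementary trick, which the paper uses. Pick $x_0\in S_\mu\setminus\{e\}$; by irreducibility and aperiodicity there is $k_0$ such that $\mu^{(n)}(x_0)\,\mu^{(n)}(x_0^{-1})>0$ for all $n\ge k_0$. If $\mu^{(n)}(e)=\rho^n$ for some such $n$, then
\[
\rho^{2n}\ \ge\ \mu^{(2n)}(e)\ =\ \sum_{x}\mu^{(n)}(x)\,\mu^{(n)}(x^{-1})\ \ge\ \mu^{(n)}(e)^2+\mu^{(n)}(x_0)\,\mu^{(n)}(x_0^{-1})\ >\ \rho^{2n},
\]
a contradiction. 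This directly rules out equality for every $n\ge k_0$, with no propagation argument needed.
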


\begin{proof} The convergence result is the multiplicative version of {\sc Fekete}'s Lemma \cite{Fek}. 
Let $a_n = \mu^{(n)}(e)$. Then $a_n > 0$ for all $n \ge n_0$ and $a_m a_n \le a_{m+n} \le 1$. 
For fixed $m \in n$, let $n = qm + r$ with $n_0 \le r=r_n < n_0+m$. Then 
$$
a_n \ge a_r a_m^q  \ge c_m a_m^q \,, \quad\text{where}\quad c_m =\min \{ a_r: n_0 \le r < n_0+m\} > 0.
$$
Therefore $a_n^{1/n} \ge c_m^{1/n} a_m^{q/n}$, and letting $n \to \infty$, 
$$
\rho(\mu) \ge \liminf_{n \to \infty} a_n^{1/n} \ge a_m^{1/m}\,. 
$$
This holds for every $m$, and letting $m \to \infty$, we get that $a_m^{1/m}$ tends
to $\rho(\mu)$ from below. For general $x$, there is $k$ such that $\mu^{(k)}(x) > 0$,
and $\mu^{(n)}(x) \ge \mu^{(k)}(x)\mu^{(n-k)}(e)$. Taking $n^{\text{th}}$ roots on both
sides, we get that also $\mu^{(n)}(x)^{1/n}$ converges.

The last observation is due to {\sc Gerl}~\cite{Gerl78}: fix $x_0 \in S_{\mu} \setminus \{e\}$.
Then $x_0^{-1} \in S_{\mu}^{r_0}$ for some $r_0 \in \N$, and by the above,
$$
\mu^{(n)}(x_0)\mu^{(n)}(x_0^{-1}) > 0 \quad \text{for all }\; n \ge n_0+r_0 =:k_0
$$
Now suppose that for some $n \ge k_0$,
$\mu^{(n)}(e)=\rho(\mu)^n.$ Then 
$$
\rho(\mu)^{2n} \ge \mu^{(2n)}(e) = \sum_{x \in \Gamma} \mu^{(n)}(x)\mu^{(n)}(x^{-1})
= \rho(\mu)^{2n} + \sum_{x\ne e} \mu^{(n)}(x)\mu^{(n)}(x^{-1}).
$$
But then $\mu^{(n)}(x_0)\mu^{(n)}(x_0^{-1}) = 0$, a contradiction.
\end{proof}

The following result of {\sc Gerl}~\cite{Gerl78} is less straightforward, and was the
primary motivation for writing this little note. It is based on an argument in \cite{Gerl73},
compare also with {\sc Guivarc'h}~\cite[pp. 18-19]{Gui}. 
According to {\sc Le Page}~\cite{LeP}, the argument can be traced back to 
{\sc Orey and Kingman}~\cite{OK}.

\begin{theorem1}\label{thm:ratio} If $\mu$ is irreducible and aperiodic on $\Gamma$,
then
$$
\lim_{n \to \infty} \frac{\mu^{(n+1)}(x)}{\mu^{(n)}(x)} = \rho(\mu) 
\quad \text{for every }\; x \in \Gamma\,.
$$ 
\end{theorem1}

\begin{proof} Write $\rho = \rho(\mu)$. 
Recall the transition probabilities $p(x,y) = \mu(x^{-1}y)$ of the random walk.
Irreducibility yields that there is a positive $\rho$-subharmonic function 
$h: \Gamma \to (0\,,\,\infty)$, that is,
$$
\sum_{y \in \Gamma} \mu(x^{-1}y)h(y) \le \rho\, h(x) \quad \text{for every }\; x \in \Gamma\,.
$$
See \cite{Pruitt}, \cite{Sen} or \cite[Lemma 7.2]{Wbook}. In many cases, there even is
such a function which is $\rho$-harmonic, i.e., equality holds at every $x$.

We now define a new Markov chain on $\Gamma$, the \emph{$h$-process,} with transition probabilities
$$
p_h(x,y) = \frac{\mu(x^{-1}y)h(y)}{\rho\,h(x)}.
$$
This is in general not a group-invariant random walk. The
transition matrix (denoted $Q$ in \cite{Gerl78})
$$
P_h = \bigl(p_h(x,y)\bigr)_{x,y \in \Gamma}
$$
is substochastic, i.e., $\sum_y p_h(x,y) \le 1$ for all $x$, so that there may be a positive
probability that the Markov chain ``dies'' at $x$. Furthermore, along with $\mu$ it is 
irreducible and aperiodic: for all $x,y$, there is $n_{x,y}$ such that $p_h^{(n)}(x,y) > 0$
for all $n \ge n_{x,y}\,$, where $p_h^{(n)}(x,y)$
is the $(x,y)$-entry of the matrix power $P_h^n\,$. We have by Lemma \ref{lem:roots}
$$
p_h^{(n)}(x,y) = \frac{\mu^{(n)}(x^{-1}y)h(y)}{\rho^n\,h(x)} \AND 
\lim_{n\to \infty} p_h^{(n)}(x,y)^{1/n} = 1 \quad \text{for all }\;x,y \in \Gamma\,.
$$
In particular, for all $x$,
$$
0 < p_h^{(n)}(x,x) = \frac{\mu^{(n)}(e)}{\rho^n} < 1 \quad \text{for all $n \ge k_0\,$.}
$$
We now fix $m \ge k_0$ and set $a = a_m = 1- p_h^{(m)}(x,x)$, so that $0 < a < 1$, as well as
$$
Q = \frac{1}{a} \bigl( P_h^m - (1-a)I\bigr)\,,
$$
where $I$ is the identity matrix over $\Gamma$.  (The matrix $Q$ is denoted $R$ in \cite{Gerl78}.)
We shall also need the matrix $E$ over $\Gamma$ with all entries $=1$.
For the next lines of the proof, we just write $P$ for $P_h^m\,$.
Then $Q$ is also non-negative, substochastic and irreducible, and $P = a\, Q + (1-a)I$. Note 
that $Q$ commutes with $P_h\,$, and that $P_hE \le E$. Then
$$
P^n = \sum_{k=0}^n p_a(n,k) \,Q^k\,, \quad\text{where}\quad  
p_a(n,k) = {n \choose k}a^k(1-a)^{n-k}\,.
$$
The latter is the probability that the sum $S_n = X_1 +\dots+X_n$ of i.i.d. Bernoulli
random variables with $\Prob[X_k=1] = a$ has value $k$. For $\ep > 0$, consider the set
$$
C_n(\ep) = \bigl\{ k \in \{0\,,\dots, n\} : 
p_a(n,k) \le (1+\ep)\,p_a(n+1,k+1) \bigr\}
$$
and its complement $C_n(\ep)^c$ in  $\{0\,,\dots, n\}$. Then
$$
\sum_{k \in C_n^c} p_a(n,k) = \Prob \biggl[   \frac{S_n+1}{n+1}-a > \ep\,a \biggr]\,.
$$
This is a large deviation probability, which is well known to decay exponentially, 
i.e., there is $\delta > 0$
such that it is $\le e^{-n\delta}$. In our specific case, this can also be verified
by combinatorial computations. See e.g. {\sc R\'enyi}~\cite[p. 324]{Renyi}.
Then, using matrix products and elementwise inquality between matrices,
$$
\begin{aligned}
\frac{1}{a}P^{n+1}  - \frac{1-a}{a}P^n = QP^n &= \sum_{k=0}^n p_a(n,k)Q^{k+1}\\
&\le e^{-\delta n}E + (1+\ep)\sum_{k \in C_n(\ep)} p_a(n+1,k+1)Q^{k+1}\\ 
&\le e^{-\delta n}E + (1+\ep)P^{n+1}\,.
\end{aligned}
$$
Reassembling the terms,
$$
\Bigl( 1 - \frac{a}{1-a}\, \ep\Bigr)P^{n+1} \le \frac{a}{1-a}\,e^{-\delta n}E + P^n\,.
$$
We multiply from the left with $P_h^r$, where $r \in \N_0$ is arbitrary, and
get for the matrix elements
$$
\left( 1 - \frac{a}{1-a} \ep\right)\frac{p_h^{(mn + m + r)}(x,y)}{p_h^{(mn + r)}(x,y)} \le 
 \frac{a}{1-a}\,\frac{e^{-\delta n}}{p_h^{(mn + r)}(x,y)} + 1\,.
$$
We are not dividing by $0$ if $n$ is sufficiently large, and since 
$p_h^{(mn + r)}(x,y)^{1/n} \to 1$, the right hand side tends to 1 as $n \to \infty$.
Since we can choose $\ep$ arbitrarily small, we get
\begin{equation}\label{eq:limsup}
\limsup_{n\to \infty} \frac{p_h^{(mn + m + r)}(x,y)}{p_h^{(mn + r)}(x,y)} \le 1\,,
\end{equation}
and this holds for every $m \ge k_0$ and every $r \ge 0$.

\smallskip

For an analogous lower bound, we use the set 
$$
D_n(\ep) = \bigl\{ k \in \{0\,,\dots, n\} : 
p_a(n+1,k+1) \le (1+\ep)p_a(n,k) \bigr\}
$$
and observe that
$$
\sum_{k \in C_n^c} p_a(n+1,k+1) = \Prob \biggl[\frac{S_{n+1}+1}{n+1}-a < - \frac{\ep}{1+\ep}\,a \biggr]\,.
$$
also decays exponentially, and is bounded by $e^{-\delta n}$ for some $\delta > 0$. Then
$$
\begin{aligned}
P^{n+1}  &\le e^{-\delta n}E + (1+\ep)\sum_{k \in D_n(\ep)} p_a(n,k)Q^{k+1}\\
&\le e^{-\delta n}E + (1+\ep)P^n Q 
= e^{-\delta n}E + (1+\ep)\left(\frac{1}{a}P^{n+1}  - \frac{1-a}{a}P^n\right)\,.
\end{aligned}
$$
Reassembling the terms,
$$
(1+\ep)P^n \le \frac{a}{1-a}\,e^{-\delta n}E + \Bigl(1 + \frac{\ep}{1-a}\Bigr)P^{n+1}\,.
$$
Proceeding as above, we get for all $m \ge k_0$ and all $r \ge 0$
\begin{equation}\label{eq:liminf}
\liminf_{n\to \infty} \frac{p_h^{(mn + m + r)}(x,y)}{p_h^{(mn + r)}(x,y)} \ge 1\,,
\end{equation}
for every $m \ge k_0$ and every $r \ge 0$. Since these two numbers chan be chosen arbitrarily,
it is an easy exercise to deduce from \eqref{eq:limsup} and \eqref{eq:liminf} that 
$$
\lim_{n\to \infty} \frac{p_h^{(n+1)}(x,y)}{p_h^{(n)}(x,y)} = 1\,,
$$
and the stated result follows.
\end{proof}

\begin{rmk}\label{rmk:nogroups}
The last theorem is not restricted to random walks on groups. If $P$ is the transition matrix
of an irreducible Markov chain on a countable set -- say -- $\Gamma$, and it is
\emph{strongly aperiodic,} that is,
$$
\gcd \bigl\{ n \in \N : \inf_x p^{(n)}(x,x) > 0 \bigr\} = 1\,,
$$
then the same proof applies to show that
$$
\lim_{n\to \infty} \frac{p^{(n+1)}(x,y)}{p^{(n)}(x,y)} = \rho(P) \quad \text{for all }\; 
x,y \in \Gamma\,. \eqno\square
$$
\end{rmk}

In the situation of Theorem 1, assume for simplicity that $S_{\mu}$ is finite. 
Then it is well known and easy to decuce that the sequence of measures
$$
\Bigl(\frac{\mu^{(n)}}{\mu^{(n)}(e)}\Bigr)_{n \in \N}
$$
is relatively compact in the topology of pointwise convergence, and every 
limit measure $\nu$ satisfies the convolution equation
\begin{equation}\label{eq:conv}
 \mu * \nu = \rho(\mu)\cdot \nu\,,
\end{equation}
or in other words, the function $h(x) = \nu(x^{-1})$
is $\rho(\mu)$-harmonic,
that is 
$$
\sum_y \mu(x^{-1}y)h(y) = \rho(\mu)\, h(x) \quad \text{for all }\; x \in \Gamma\,. 
$$
\cite{Gerl78} and  (under slightly different conditions, where
the state space is not necessarily discrete) \cite{Gui}  prove the following ratio limit theorem.

\begin{theorem2}\label{thm:2}
Assume that $\mu$ is irreducible and aperiodic, and that $S_{\mu}$ is finite.
Suppose that {\sf (P)} is a certain property of positive measures $\nu$ on $\Gamma$
such that
\begin{itemize}
 \item every limit along a pointwise convergent subsequence 
$$
\Bigl(\frac{\mu^{(n_k)}}{\mu^{(n_k)}(e)}\Bigr)_{k \in \N}
$$
must have property {\sf (P)}, and
\item there is a unique positive measure $\nu$ with property {\sf (P)} that satisfies
$\nu(e)=1$ and~\eqref{eq:conv}.
\end{itemize}
Then 
$$
\lim_{n \to \infty}\frac{\mu^{(n)}(x)}{\mu^{(n)}(e)} = \nu(x) \quad \text{for all }\; x \in \Gamma.
$$
\end{theorem2}

The proof is clear in view of the observations preceding the statement of Theorem 2, 
i.e., relative compactness and \eqref{eq:conv}, which are left to the reader as exercises
or to be looked up in old references. Finiteness of $S_{\mu}$ can be relaxed.
Typical examples of application are (virtually) Abelian groups, where property {\sf (P)}
is empty, because there is a unique solution $\nu$ of \eqref{eq:conv}: there is a good 
amount of literature from the 1960ies on this. The most significant reference for Abelian
groups is {\sc Stone}~\cite{Stone}.

Other typical examples are isotropic random walks on free groups, and property {\sf (P)}
is that $\nu$ is also isotropic; see \cite{Gerl78}.
It should be noted that in those cases, one also has stronger results, namely 
local limit theorems; see \cite[Chapter III]{Wbook}. 

\smallskip

\textbf{\, Author's final, personal remarks.} When I wrote the monograph \cite{Wbook} in the 
$2^{\text{nd}}$ half of the 1990ies, ratio limit theorems were not an active topic,
but replaced by the study of the asymptotic type of random walk transition probabilities and
the sharper local limit theorems. Therefore, having to keep the book size under control,
I had ``sacrificed'' the material of ratio limit theorems. In the meantime, the 
subject has ``woken up'' again, e.g. in a recent paper of {\sc Dor-On}~\cite{Doron} and current
work of {\sc Dougall and Sharp}.

\end{document}